\journal{Linear Algebra and Its Applications}
\newtheorem{thm}{Theorem}[section]
\newdefinition{defi}[thm]{Definition}\sl
\newdefinition{remark}[thm]{Remark}\rm
\newtheorem{lemma}[thm]{Lemma}
\newtheorem{prop}[thm]{Proposition}
\newtheorem{cor}[thm]{Corollary}
\newdefinition{ex}[thm]{Example}
\newcommand{\im}{\operatorname{Im}}
\newcommand{\spann}{\operatorname{span}}
\newcommand{\rank}{\operatorname{rank}}
\newcommand{\sign}{\operatorname{sign}}
\newcommand{\diag}{\operatorname{diag}}
\begin{document}
\begin{frontmatter}

\title{Hadamard Powers and Kernel Perceptrons}

\author{Tobias Damm\corref{cor1}}
\ead{damm@mathematik.uni-kl.de}
\cortext[cor1]{Corresponding author}
\author{Nicolas Dietrich}
\ead{ndietric@mathematik.uni-kl.de}

\address{Department of
    Mathematics,  University of Kaiserslautern-Landau, Germany}

\begin{abstract} 
We study a relation between Hadamard powers and polynomial kernel
perceptrons.  The rank of Hadamard powers for the special case of
a Boolean matrix and for the generic case of a real matrix is computed
explicitly. These results are interpreted in terms of  the
classification capacities of perceptrons.
\end{abstract}

\begin{keyword}
	Hadamard power \sep kernel perceptron 
        \sep Boolean matrix \sep generic rank\\
	\emph{AMS(MOS) subject classification:} 15A03, 
          15A45 
          15B34 
          94D10, 
          68T05 
\end{keyword}

\end{frontmatter}

\section{Introduction}
The $d$-th Hadamard power ($d\in\mathbb{N}$) of a matrix is obtained by taking the $d$-th
power of each entry.  A kernel perceptron is a nonlinear classifier
that separates data in a feature space. 
In this note we discuss a relation between the rank of Hadamard powers of a matrix
and the shattering properties of kernel perceptrons with polynomial
kernel. These notions will be explained in the text.
More precisely, we consider Hadamard powers of the Gramian
matrix associated to a set of data vectors. If the rank of the
$d$-th Hadamard power is maximal,
then any separation of the data set can be classified by a polynomial
kernel perceptron of degree $d$. 
In this context we provide a new proof for the fact that any
Boolean function in $n$ variables can be realized by a kernel
perceptron with polynomial kernel of degree $n$. We compare our rank calculations to lower bounds
obtained quite recently in \cite{HornYang20}. 
Moreover, we show that generically
the rank of the $d$-th Hadamard power of  a matrix
$A\in\mathbb{R}^{n\times m}$ is
\begin{align}
  \label{eq:1}
  \min\left\{\binom{\rank
    A+d-1}{d},n,m\right\}\;.
\end{align}
Properties of Hadamard products and their applications have received attention over many
years, e.g., in \cite{Stya73, Ando79, HornJohn12, FiedMark88, HornJohn91,  Math92,
  Fali96}. A prominent result is Schur's product theorem \cite{Schu11}, which
states that the Hadamard product preserves nonnegative definiteness. Definiteness,
rank, and
other properties of Hadamard powers have been discussed in
\cite{MarcFran92, BhatElsn07, FallJohn07, GuilKhar15, Jain17, BeltGuil19, HornYang20, Jain20, PanwRedd22}. Applications
of Hadamard products in artificial intelligence can be found in
\cite{Zhang20}. But to our knowledge, the relation of Hadamard powers
to polynomial kernel perceptrons has not been accented so far, although
the field of machine learning is very active now. Concerning kernel
perceptrons, we largely follow
the presentation  in the monograph
\cite{SchoSmol02}. The kernelized approach, however, goes back to \cite{AizeBrav65} and
has been taken up, e.g., in \cite{Anth95, CortVapn95}.\\
Our paper is structured as follows.
In Section \ref{sec:hadam-prod-polyn}, we introduce the Hadamard product and Hadamard powers.
In Section~\ref{sec:class-dich-with} we compute the rank of Hadamard
powers explicitly for special matrices that play a role in the later discussion.
In contrast, Section~\ref{sec:rank-hadamard-powers} is devoted to the generic case.
We show that for all $r$ and all matrices from an open and dense subset of
$M_r=\{ A\in\mathbb{R}^{n\times m}\;\big|\; \rank A=r\}$ these ranks
equal \eqref{eq:1}. From this, we also draw conclusions for
non-integer Hadamard powers.
In Section~\ref{sec:kernel-perceptrons} we collect  facts on
kernel perceptrons and the associated dual optimization problem 
before we apply the obtained results in the context of Boolean and general classifications problems, respectively.

\section{Hadamard products and powers}
\label{sec:hadam-prod-polyn}

For arbitrary matrices $A=\left(a_{ij}\right), B=\left(b_{ij}\right)\in \mathbb{R}^{n\times m}$ the Hadamard
product  is defined as the componentwise product $A\circ
B=\left(a_{ij}b_{ij}\right)$.  In the same way, we define the Hadamard
power $A^{\circ d}=\left(a_{ij}^d\right)$ for $d\in\mathbb{N}$, e.g.,
\cite{HornJohn12, HornJohn91}. The following simple identity for
rank-1 matrices can be found in \cite{Stya73} or \cite[Fact 9.6.2]{Bern18}.
\begin{lemma}\label{lemma:Hadamard_mixed_product}
 If $u,w\in\mathbb{R}^n$, $v,z\in\mathbb{R}^m$ then
  \begin{align*}
   \left(uv^T\right)\circ \left(wz^T\right)&=(u\circ w)(v\circ z)^T\;.
  \end{align*}
\end{lemma}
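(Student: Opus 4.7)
The plan is to prove the identity by direct entrywise comparison, since both sides of the claimed equality are $n\times m$ matrices and the Hadamard product as well as outer products are defined componentwise. No auxiliary machinery is needed beyond the definitions given in the section.

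Concretely, I would fix arbitrary indices $1\le i\le n$ and $1\le j\le m$ and write out the $(i,j)$-entry of the left-hand side. By definition of the rank-one matrices $uv^T$ and $wz^T$, these entries are $u_i v_j$ and $w_i z_j$, respectively. The definition of the Hadamard product then gives
\[
\bigl((uv^T)\circ(wz^T)\bigr)_{ij} = u_i v_j \cdot w_i z_j.
\]
Using commutativity and associativity of real multiplication, I would regroup the factors as $(u_i w_i)(v_j z_j)$.

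Next I would identify this regrouped expression as the $(i,j)$-entry of the right-hand side. Indeed, $u\circ w\in\mathbb{R}^n$ has $i$-th component $u_i w_i$ and $v\circ z\in\mathbb{R}^m$ has $j$-th component $v_j z_j$, so by definition of the outer product
\[
\bigl((u\circ w)(v\circ z)^T\bigr)_{ij} = (u_i w_i)(v_j z_j),
\]
matching what was computed above. Since $i$ and $j$ were arbitrary, the two matrices coincide entrywise, which yields the claimed equality.

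The only potential obstacle is purely bookkeeping: making sure the dimensions of the outer products line up (the left factor lives in $\mathbb{R}^n$ and the right in $\mathbb{R}^m$, so the result is indeed $n\times m$ on both sides) and that the Hadamard products $u\circ w$ and $v\circ z$ are between vectors of matching length. Once the indices are tracked carefully, the proof reduces to a one-line scalar rearrangement.
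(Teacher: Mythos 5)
Your proof is correct: the entrywise computation $u_i v_j \cdot w_i z_j = (u_i w_i)(v_j z_j)$ is exactly the standard argument, and the dimension bookkeeping checks out. The paper itself does not prove this lemma but only cites it (Styan and Bernstein), so your direct verification is precisely the canonical proof those sources give.
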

By this lemma it follows easily that the Hadamard product preserves
nonnegative definiteness. Namely, if $A=A^T\in\mathbb{R}^{n\times n}$ and $B=B^T\in\mathbb{R}^{n\times n}$ are
nonnegative definite, $A\succeq 0$ and $B\succeq 0$ in short, they can be written as sums of rank-1 matrices,
i.e.,
\begin{align}\label{eq:AcircBge0}
  A\circ B&=\left(\sum_{i=1}^n a_ia_i^T\right)\circ \left(\sum_{j=1}^n
            b_jb_j^T\right)
  =\sum_{i,j=1}^n (a_i\circ b_j)(a_i\circ b_j)^T \succeq 0\;.
\end{align}
We will also make use of the following  identities. 
For $C=[C_1,\ldots,C_r]\in\mathbb{R}^{n\times r}$ and
$D=[D_1,\ldots,D_r]\in\mathbb{R}^{m\times r}$ we can apply
Lemma~\ref{lemma:Hadamard_mixed_product} to get
\begin{align*}
    \left(\sum_{j=1}^n C_j D_j^T\right)^{\circ 2}
    &=\left(\sum_{i=1}^n C_i D_i^T\right)
      \circ \left(\sum_{j=1}^n C_j D_j^T\right) \\
    &=\sum_{i,j=1}^n (C_iD_i^T)\circ (C_jD_j^T) =\sum_{i,j=1}^n (C_i\circ C_j)(D_i\circ D_j)^T\;,
\end{align*}
which for $d=2,3,\ldots$ inductively implies
\begin{align}
  \label{eq:hadamard_power_calc}
  \left(\sum_{j=1}^n C_j D_j^T\right)^{\circ d}&=
   \sum_{\ell_1,\ldots,\ell_d=1}^n (C_{\ell_1}\circ\cdots \circ C_{\ell_d})(D_{\ell_1}\circ\cdots \circ D_{\ell_d})^T\;.
\end{align}
In Section~\ref{sec:kernel-perceptrons} the rank of Hadamard powers of
nonnegative definite matrices is important.
Relations of this rank to the Kruskal rank and the Hadamard power rank of the
matrix have been reported in \cite{HornYang20}.
\begin{defi}
  Let $A\in\mathbb{R}^{n\times m}$. If $A$ possesses a zero column, then its
  \emph{Kruskal rank} $k_A$ is zero. Otherwise $k_A$ is defined as the
  largest positive integer $k$, such that any selection of $k$
  distinct columns of $A$  is linearly independent.\\
  If $\rank A\ge 2$, then the \emph{Hadamard power rank} $h_A$ of $A$ is the largest positive
  integer $h$, such that there exists a selection of $h$ distinct
  columns of $A$, which are pairwise linearly independent. If $\rank
  A<2$, then $h_A=\rank A$.
\end{defi}
With this notation, we can summarize \cite[Prop.~6 and Cor.~9]
{HornYang20} as follows.
\begin{prop}\label{prop:HornYang}
  Let $A\in\mathbb{R}^{n\times n}$ with $n\ge 2$ be nonnegative definite.
  \begin{itemize}
  \item[(a)]  $k_A\ge 2$ $\iff$ $\forall d\ge n-1: A^{\circ d}$ is
    positive definite.
  \item[(b)] $\forall d\ge \max\{h_A-1,1\}: \rank A^{\circ d}=h_A$.
  \end{itemize}
\end{prop}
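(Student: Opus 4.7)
The plan is to reduce both parts to a single linear-independence fact about tensor powers, via a Gram-matrix interpretation of $A^{\circ d}$. Since $A$ is nonnegative definite with $r=\rank A$, I would factor $A=BB^T$ with $B\in\mathbb{R}^{n\times r}$ of full column rank, and let $\beta_1,\dots,\beta_n\in\mathbb{R}^r$ denote the rows of $B$. Then $A_{ij}=\langle\beta_i,\beta_j\rangle$ and hence
\[
A^{\circ d}_{ij}=\langle\beta_i,\beta_j\rangle^d=\langle\beta_i^{\otimes d},\beta_j^{\otimes d}\rangle,
\]
which shows $A^{\circ d}$ is the Gramian of the tensor powers and $\rank A^{\circ d}=\dim\spann\{\beta_i^{\otimes d}\mid 1\le i\le n\}$. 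Because $B$ has full column rank, the map $\beta_i\mapsto B\beta_i$ bijects onto the column space of $A$ and respects zeroness and proportionality; combined with $v^{\otimes d}=0\iff v=0$ and $v^{\otimes d}\parallel w^{\otimes d}\iff v\parallel w$ (injectivity of the Veronese map on projective space), one obtains that the number of projective directions among the nonzero $\beta_i^{\otimes d}$ coincides with $h_A$. This already yields $\rank A^{\circ d}\le h_A$.

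The matching lower bound reduces to the claim: \emph{if $v_1,\dots,v_h\in\mathbb{R}^r$ are nonzero and pairwise non-proportional, then $v_1^{\otimes d},\dots,v_h^{\otimes d}$ are linearly independent for every $d\ge h-1$.} Identifying $\mathrm{Sym}^d\mathbb{R}^r$ with homogeneous polynomials of degree $d$ via $v^{\otimes d}\leftrightarrow(v\cdot x)^d$, a putative relation $\sum c_i(v_i\cdot x)^d=0$ restricted to a $2$-plane $x=su+tw$ yields $\sum c_i(\alpha_i s+\gamma_i t)^d=0$ with $\alpha_i=v_i\cdot u$, $\gamma_i=v_i\cdot w$. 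For $(u,w)$ chosen outside the finitely many proper subsets on which two of the forms $\alpha_i s+\gamma_i t$ become proportional, the coefficient matrix of the resulting $h$ binary forms of degree $d$ is a diagonally rescaled Vandermonde in the pairwise distinct ratios $\alpha_i/\gamma_i$; since $d\ge h-1$, it has rank $h$, forcing every $c_i=0$. The degenerate cases $\rank A\le 1$ are handled directly: if $A=0$ then $h_A=0=\rank A^{\circ d}$, while if $A=cc^T$ with $c\ne 0$ then $A^{\circ d}=c^{\circ d}(c^{\circ d})^T$ has rank $1=h_A$.

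For part (a) I would deduce the claim from (b) via the observation that $k_A\ge 2$ iff every pair of columns of $A$ is linearly independent, iff $h_A=n$. Applying (b) with $d\ge n-1=h_A-1$ yields $\rank A^{\circ d}=n$, so $A^{\circ d}$ is positive definite. Conversely, if $k_A\le 1$ then some two columns of $A$ are linearly dependent, say $a_i=\lambda a_j$, so the $i$-th and $j$-th columns of $A^{\circ d}$ satisfy $a_i^{\circ d}=\lambda^d a_j^{\circ d}$, and $A^{\circ d}$ is singular for every $d\ge 1$. The principal obstacle is the key claim on tensor powers of linear forms; the generic-slice argument is clean but requires verifying that the exceptional locus of $(u,w)$ is proper in $\mathbb{R}^r\times\mathbb{R}^r$, and an inductive variant (e.g.\ applying a directional derivative along a direction separating $v_h$ from the remaining $v_i$'s to kill one term) could serve as a substitute if a slicker presentation is desired.
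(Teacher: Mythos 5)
The paper offers no proof of Proposition~\ref{prop:HornYang} at all: it is explicitly imported as a summary of Prop.~6 and Cor.~9 of \cite{HornYang20}, so there is no in-paper argument to compare yours against. Judged on its own, your proof is correct and self-contained, and it is worth recording why. The Gram factorization $A=BB^T$ with $B$ of full column rank, giving $A^{\circ d}_{ij}=\langle\beta_i,\beta_j\rangle^d=\langle\beta_i^{\otimes d},\beta_j^{\otimes d}\rangle$, correctly identifies $\rank A^{\circ d}$ with $\dim\spann\{\beta_i^{\otimes d}\}$ and simultaneously shows $A^{\circ d}\succeq 0$ (so that full rank upgrades to positive definiteness in part (a) without a separate appeal to Schur's theorem). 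Injectivity of $v\mapsto Bv$ transports zeroness and proportionality between the columns of $A$ and the vectors $\beta_i$, and injectivity of the Veronese map does the same for the tensor powers, which yields both the identification of $h_A$ with the number of projective directions and the upper bound $\rank A^{\circ d}\le h_A$. The crux is your lemma that $d$-th powers of $h$ nonzero, pairwise non-proportional linear forms are linearly independent once $d\ge h-1$; your generic two-plane slice followed by a Vandermonde argument in the ratios $\alpha_i/\gamma_i$ is a valid proof, since the conditions to be avoided (some $v_i\cdot w=0$, or $(v_i\cdot u)(v_j\cdot w)=(v_j\cdot u)(v_i\cdot w)$ for some $i\ne j$) each cut out a proper algebraic subset of $\mathbb{R}^r\times\mathbb{R}^r$ precisely because the $v_i$ are nonzero and pairwise linearly independent, so a suitable $(u,w)$ exists. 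The degenerate cases $\rank A\le 1$ and the reduction of (a) to (b) via $k_A\ge 2\iff h_A=n$ (with the contrapositive handled by $a_i=\lambda a_j\Rightarrow a_i^{\circ d}=\lambda^d a_j^{\circ d}$) are also sound. What this buys is a complete elementary replacement for the external citation; it also makes transparent why the bound $d\ge h_A-1$ is what the general argument delivers, whereas the paper's Theorem~\ref{thm:main} shows that for the specific Boolean Gramian the maximal rank is attained much earlier.
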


\section{The rank of Hadamard powers of a Boolean matrix}
\label{sec:class-dich-with}
In this section we consider a special matrix that plays a central role
in our application to kernel perceptrons
and Boolean functions in Section~\ref{sec:kernel-perceptrons}. Here we
compute the ranks of all its Hadamard powers explicitly.
For $j=0,\ldots, 2^n-1$ let 
\begin{equation*}
x_j=\left[
  \begin{array}{c}
    x_{1,j}\\\vdots\\ x_{n,j}
  \end{array}
\right]\in\{0,1\}^n
\end{equation*}
 contain a binary representation of
$j$, i.e.,
\begin{align}\label{eq:defXn}
  j&=\sum_{\ell=1}^n x_{\ell,j}2^{\ell-1}\;.
\end{align}
We say that in the binary representation of $j$ the $\ell$-th bit is
\emph{active}, if $x_{\ell,j}=1$.\\
Using $X=[x_1,\ldots,x_{2^n-1}]\in\mathbb{R}^{n\times 2^n-1}$ we
define $K=X^TX$. 
We aim to calculate the rank of $K^{\circ d}$. In Section
\ref{sec:kernel-perceptrons} the matrix $K^{\circ d}$ 
will be related to the kernel of a perceptron. 
Since $x_0=0$, the first column and row of $K^{\circ d}$ vanishes for
all $d>0$, i.e.,
$K^{\circ d}e_1=0$. 
 Hence $\rank K^{\circ d} \le 2^n-1$.
\begin{ex}\label{ex:Bcasen3}
  For $n=3$ we have
    \begin{align*}
X^T= \left[
    \begin{array}{ccc}
     0&0&0\\
     1&0&0\\
     0&1&0\\
     1&1&0\\
     0&0&1\\
     1&0&1\\
     0&1&1\\
     1&1&1
    \end{array}
\right],\quad
   K^{\circ d}=(X^TX)^{\circ d}= \left[
    \begin{array}{cccccccc}
     0&0&0&0&0&0&0&0\\
     0&1&0&1&0&1&0&1\\
     0&0&1&1&0&0&1&1\\
     0&1&1&2^d&0&1&1&2^d\\
     0&0&0&0&1&1&1&1\\
     0&1&0&1&1&2^d&1&2^d\\
     0&0&1&1&1&1&2^d&2^d\\
     0&1&1&2^d&1&2^d&2^d&3^d
    \end{array}
\right]
  \end{align*}
  with $\rank K^{\circ 1}=3$, $\rank K^{\circ 2}=6$ and $\rank K^{\circ 3}=7$.\\
   Because of the zero column in all $K^{\circ d}$, the rank cannot increase further.
Note also that the Hadamard power rank $h_K$ equals
$7$, since the last seven columns of $K$ are pairwise linearly
independent. According to Proposition~\ref{prop:HornYang} we know only
that $\rank K^{\circ d}=7$ for all $d\ge 6$. Our observation that
already  $\rank K^{\circ 3}=7$ thus is stronger.
We will show that $\rank K^{\circ n}=2^n-1=h_K$ for all $n$. 
\end{ex}
\begin{thm}\label{thm:main}
  Let $n\in\mathbb{N}$ and $X=(x_{\ell,j})\in \{0,1\}^{n\times
    (2^n-1)}$ be defined according to \eqref{eq:defXn}.  Then 
  \begin{align*}
\rank K^{\circ d}
    &=\left\{
                        \begin{array}{ll}\displaystyle
                          \sum_{p=1}^d\binom{n}{p},&\text{ if }d\le n\;,\\[5mm]
                           \displaystyle  2^n-1,&
                                \text{ if } d\ge n\;.
                        \end{array}\right.
  \end{align*}
\end{thm}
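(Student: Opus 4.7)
The plan is to expand $K^{\circ d}$ as a positively weighted sum of rank-one PSD pieces indexed by subsets of $\{1,\ldots,n\}$ and then reduce the rank computation to a linear-independence statement about indicator vectors of a lattice.

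Writing the rows of $X$ as $r_\ell=X^Te_\ell\in\{0,1\}^{2^n-1}$, one has $K=X^TX=\sum_{\ell=1}^n r_\ell r_\ell^T$, and \eqref{eq:hadamard_power_calc} gives
\[K^{\circ d}=\sum_{\ell_1,\ldots,\ell_d=1}^n (r_{\ell_1}\circ\cdots\circ r_{\ell_d})(r_{\ell_1}\circ\cdots\circ r_{\ell_d})^T.\]
Because the $r_\ell$ are $0/1$ vectors, $r_{\ell_1}\circ\cdots\circ r_{\ell_d}$ depends only on the set $S=\{\ell_1,\ldots,\ell_d\}$; call this common vector $r_S$. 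Identifying each nonzero column $x_j$ of $X$ with its support $T\subseteq\{1,\ldots,n\}$, the entry of $r_S$ at position $j$ equals $1$ iff $S\subseteq T$. Grouping the $n^d$ index tuples by their support and letting $c_{S,d}\in\mathbb{N}$ count the surjective $d$-tuples onto $S$, this rearranges to the factorisation
\[K^{\circ d}=R_d D_d R_d^T,\qquad R_d=[r_S]_{1\le|S|\le\min(d,n)},\qquad D_d=\diag(c_{S,d})\succ0.\]

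Since $D_d\succ 0$, the kernel of $K^{\circ d}$ coincides with the kernel of $R_d^T$, hence $\rank K^{\circ d}=\rank R_d$. To show the columns of $R_d$ are linearly independent, I would consider the full zeta matrix $Z$ of the Boolean lattice on the $2^n-1$ nonempty subsets, with $Z_{T,S}=\mathbf{1}_{S\subseteq T}$; ordering the indexing subsets by increasing cardinality makes $Z$ lower triangular with unit diagonal, hence invertible, so any selection of its columns is linearly independent. The columns of $R_d$ form such a selection, so $\rank R_d=\sum_{p=1}^{\min(d,n)}\binom{n}{p}$, which matches the two cases $d\le n$ and $d\ge n$ in the statement.

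The main obstacle is the first half, namely collapsing the $n^d$ index tuples in \eqref{eq:hadamard_power_calc} down to the distinct subsets $S$ while keeping track of the positivity of the coefficients $c_{S,d}$. Positivity is essential, because it turns the identity $K^{\circ d}=R_d D_d R_d^T$ into the rank equality $\rank K^{\circ d}=\rank R_d$; without it, cancellations could drop the rank below the number of distinct $r_S$. Once this PSD bookkeeping is in place, the combinatorial step, linear independence via the triangular zeta matrix, is essentially free.
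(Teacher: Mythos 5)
Your proposal is correct and follows essentially the same route as the paper: both expand $K^{\circ d}$ via \eqref{eq:hadamard_power_calc}, collapse the $n^d$ index tuples to Hadamard products indexed by nonempty subsets $S$ with $|S|\le\min(d,n)$, and conclude by showing these subset-indicator vectors are linearly independent through a triangularity argument. The only cosmetic differences are that you make the positive-multiplicity factorization $R_dD_dR_d^T$ explicit where the paper uses the fact that the image of a sum of PSD rank-one terms is the span of the generating vectors, and you order the zeta matrix by cardinality where the paper orders by the leading nonzero position $2^{\ell_1-1}+\cdots+2^{\ell_p-1}$.
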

\begin{proof}
Let us write $X^T=(\beta_{j,\ell}) = [B_1,\ldots, B_n]$, with $B_\ell\in \{0,1\}^{2^n}$.
The $j$-th entry $\beta_{j,\ell}$ of $B_\ell$ is $x_{\ell,j}$.
This equals $1$, if and only if the $\ell$-th bit is active in the binary
representation of the number $j$ as in \eqref{eq:defXn}. In particular,
\begin{align}\label{eq:BellUj}
\beta_{j,\ell}&=\left\{
            \begin{array}{ll}
              1,& \text{ if }j=2^{\ell-1}\\
              0,&\text{ if } j<2^{\ell-1}
            \end{array}\right. .
\end{align}
For $j=1,\ldots,2^{n-1}$, we define the sets
\begin{align*}
  U(j)=\{ u\in\mathbb{R}^{2^n}\;\big|\; u_{j+1}=1\text{ and } u_i=0\text{
  for } i\le j\}\;
\end{align*}
with the property that vectors from different $U(j)$ are linearly independent.
From \eqref{eq:BellUj}, it follows that $B_\ell\in U(2^{\ell-1})$,
as is illustrated for $n=3$ in Example~\ref{ex:Bcasen3}. 
We now consider
Hadamard products of columns $B_{\ell_1},\ldots,
B_{\ell_p}$. If $\ell_1=\ell_2$, then $B_{\ell_1}\circ
B_{\ell_2}=B_{\ell_1}$. Therefore let now $\ell_1<\cdots<\ell_p$.
Note that this product corresponds to a logical AND
operation: The $j$-th entry of
$B_{\ell_1}\circ \cdots\circ  B_{\ell_p}$ is non-zero, if and only if
the bits $\ell_1$ to $\ell_p$ in the  binary representation \eqref{eq:defXn} of $j$  are active.
The smallest number $j$ with this property is $j=2^{\ell_1-1}+\cdots+2^{\ell_p-1}$.
Hence
\begin{align*}
 B_{\ell_1}\circ \cdots\circ  B_{\ell_p}\in U\left(2^{\ell_1-1}+\cdots+2^{\ell_p-1}\right)\;,
\end{align*}
 and the set
\begin{align*}
  \{B_{\ell_1}\circ \cdots\circ
  B_{\ell_p}\;\big|\; 1\le p\le n, 1\le\ell_1<\cdots<\ell_p\le n\}
\end{align*}
  is linearly independent.
We apply these observations to the Hadamard powers of $X^TX$.
By Lemma~\ref{lemma:Hadamard_mixed_product} and Equation~\eqref{eq:hadamard_power_calc} we have
\begin{align*}
  \left(\sum_{j=1}^n B_j B_j^T\right)^{\circ d}&=
   \sum_{\ell_1,\ldots,\ell_d=1}^n (B_{\ell_1}\circ\cdots \circ B_{\ell_d})(B_{\ell_1}\circ\cdots \circ B_{\ell_d})^T\;.
\end{align*}
Note that $\beta_{j,l}\in\{0,1\}$ implies  (for possibly
  repeating indices $\tilde
   \ell_1\le \cdots\le \tilde\ell_{\tilde p}$) that
  \begin{align}\label{eq:repeating_ell}
\{\tilde\ell_1,\ldots,\tilde\ell_{\tilde p}\}=\{\ell_1,\ldots,\ell_p\}
  \quad \Rightarrow\quad B_{\ell_1}\circ \cdots\circ
  B_{\ell_p} =B_{\tilde\ell_1}\circ \cdots\circ
  B_{\tilde\ell_p}\;.
\end{align}
Hence the image of this matrix is 
\begin{align*}
\im  \left(\sum_{j=1}^n B_j B_j^T\right)^{\circ d}  
  &=\;\bigoplus_{p=1}^d \spann\{B_{\ell_1}\circ \cdots\circ
  B_{\ell_p}\;\big|\;
1\le \ell_1<\cdots<\ell_p\le n \}\;.
\end{align*}
Since each of the direct summands has dimension $\binom{n}{p}$, the
proof is complete.
\end{proof}
\section{The rank of Hadamard powers in the generic case}
\label{sec:rank-hadamard-powers}
For the matrix $X\in\{0,1\}^{n\times (2^n-1)}$ containing the binary representations of all
numbers $0,1,\ldots,2^n-1$ we have computed the ranks of the Hadamard
powers of $K=X^TX$ explicitly and found them to be larger than the lower bounds in
Proposition~\ref{prop:HornYang}. Generically, we can expect even
higher ranks. To make this precise, we now analyze sets of vectors
whose Hadamard products are in general position (compare, e.g.,~\cite{Yale88}).
\begin{defi}
  A set $\mathcal{S}\subset\mathbb{R}^n$ is in
  \emph{general position}, if any subset of $\mathcal{S}$ with at most
  $n$ elements is linearly independent.
\end{defi}
For a given matrix $A\in\mathbb{R}^{n\times m}$ with columns
$A_1,\ldots,A_m$ we define the sets of
Hadamard products of  given order $d\in\mathbb{N}$ and of arbitrary order as
\begin{align*}
  \mathcal{H}_A^d
  =\{A_{\ell_1}\circ \cdots\circ
  A_{\ell_d}\;\big|\;  1\le\ell_1\le\ell_2\le\cdots\le \ell_d\le m\}\text{ and } \mathcal{H}_A=\bigcup_{d\in\mathbb{N}} \mathcal{H}_A^d \;.
\end{align*}
Note that $\bigcup_{d=1}^N\mathcal{H}_A^d \subset \mathcal{H}_A^N$ holds only for binary $A$.
In the following example, we construct $P\in\mathbb{R}^{n\times m}$
such that $\mathcal{H}_P$ is in general position.
\begin{ex}\label{ex:B_gp}
  Let $p_j\in\mathbb{N}$ denote the $j$-th prime number, i.e.,  $(p_1,p_2,p_3,\ldots)=(2,3,5,\ldots)$ and consider
  \begin{align*}
    P&=[P_1,\ldots,P_m]=(e^{(i-1)\sqrt{p_j}})_{i=1,\ldots,n}^{j=1,\ldots,m}=\left[
       \begin{array}{ccc}
         1&\cdots&1\\
         e^{\sqrt{p_1}}&&e^{\sqrt{p_m}}\\
         \vdots&\ddots&\vdots\\
         e^{(n-1)\sqrt{p_1}}&\cdots&e^{(n-1)\sqrt{p_m}}
       \end{array}
\right]\;.
  \end{align*}
 For $d\in\mathbb{N}$ and a given $d$-tuple $(\ell_1,\ldots,\ell_{d})\in\mathbb{N}^d$ with 
 $1\le\ell_1\le\ell_2\le\cdots\le \ell_d\le m$ we set
 $\alpha_{\ell_1,\ldots,\ell_{d}}=\exp\left(\sum_{j=1}^d\sqrt{p_{\ell_j}}\right)$.
  Since the numbers $\sqrt{p_j}$ are rationally independent, it follows
 that the mapping $(\ell_1,\ldots,\ell_{d})\mapsto
 \alpha_{\ell_1,\ldots,\ell_{d}}$ from the set of ordered $d$-tuples
 in $\mathbb{N}^d$
 to $\mathbb{R}$ is injective. Hence, for different
 $d$ and different
 $(\ell_1,\ldots,\ell_{d})$, with $1\le\ell_1\le\ell_2\le\cdots\le \ell_d\le m$ the vectors 
  \begin{align*}
    P_{\ell_1}\circ \ldots\circ
    P_{\ell_d}&=\left[
  \begin{array}{ccccc}
    1&\alpha_{\ell_1,\ldots,\ell_{d}}&\alpha_{\ell_1,\ldots,\ell_{d}}^2&\ldots&\alpha_{\ell_1,\ldots,\ell_{d}}^{n-1}
  \end{array}
\right]^T
  \end{align*}
  constitute different columns of an $n\times n$ Vandermonde matrix, which is nonsingular.
  Hence $\mathcal{H}_P$ is in general position. 
\end{ex}
In fact, $\mathcal{H}_A$ is in general position for most matrices $A\in\mathbb{R}^{n\times m}$ in
the following sense.
\begin{lemma}\label{lemma:Hsetsgeneric}
  Consider the normed matrix space $\mathbb{R}^{n \times m}$ with
  $m\ge n\ge2$.
  \begin{itemize}
  \item[(a)] Let $N\in\mathbb{N}$. The set of matrices $A\in
    \mathbb{R}^{n \times m}$ for which $\bigcup_{d=1}^N\mathcal{H}_A^d$ is in
    general position is open and dense in $\mathbb{R}^{n \times m}$.
  \item[(b)] The set of matrices $A\in \mathbb{R}^{n \times m}$
    for which $\mathcal{H}_A$ is not in general position
    is of the first category (in the sense of Baire, e.g.,~\cite[page 40]{Oxto80}).
  \end{itemize}
\end{lemma}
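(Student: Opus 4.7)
My plan is to translate the general-position condition into the nonvanishing of finitely many polynomials in the entries of $A$, and then to invoke the explicit matrix $P$ from Example~\ref{ex:B_gp} to conclude that none of these polynomials is identically zero.

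For part (a), I would fix $N\in\mathbb{N}$ and index $\bigcup_{d=1}^N\mathcal{H}_A^d$ by the finite collection of nondecreasing tuples $\tau=(\ell_1,\ldots,\ell_d)$ with $1\le\ell_1\le\cdots\le\ell_d\le m$ and $1\le d\le N$. The associated Hadamard product $A_{\ell_1}\circ\cdots\circ A_{\ell_d}$ has entries that are monomials in the entries of $A$. For any choice of $n$ distinct such tuples $\tau^{(1)},\ldots,\tau^{(n)}$, the determinant of the $n\times n$ matrix whose columns are the corresponding Hadamard products is a polynomial $q_{\tau^{(1)},\ldots,\tau^{(n)}}(A)$ in the entries of $A$. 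The set $\bigcup_{d=1}^N\mathcal{H}_A^d$ is in general position if and only if every such $q$ is nonzero: nonvanishing of all these determinants forces both that distinct tuples give distinct vectors (otherwise a column is repeated and $q$ vanishes) and that any $n$ of these vectors are linearly independent, which by extension yields linear independence of every smaller subset since $|\mathcal{H}_A^1|=m\ge n$. By Example~\ref{ex:B_gp} the matrix $P$ satisfies $q(P)\ne 0$ for every such $q$, so each $q$ is a nonzero polynomial, and its zero locus is a proper algebraic subvariety---closed and nowhere dense. The bad set is a finite union of such loci, hence itself closed and nowhere dense, and its complement is open and dense.

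For part (b), if $\mathcal{H}_A$ fails to be in general position then some $n$ vectors of $\mathcal{H}_A$ are linearly dependent; these vectors arise from tuples of length at most some finite $N$, so $A\in B_N$, where $B_N$ denotes the exceptional set from part (a). The bad set for $\mathcal{H}_A$ is therefore the countable union $\bigcup_{N\in\mathbb{N}}B_N$ of nowhere-dense sets, which is meager by definition. The principal obstacle throughout is verifying that the determinant polynomials are not identically zero, and this is settled cleanly by Example~\ref{ex:B_gp}: rational independence of the $\sqrt{p_j}$ makes every relevant submatrix of Hadamard products of columns of $P$ a nonsingular Vandermonde matrix, which is precisely the input needed to bootstrap the generic argument.
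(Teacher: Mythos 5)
Your proof is correct, and its denseness argument in part (a) takes a genuinely different route from the paper's. Both proofs reduce general position to the nonvanishing of finitely many determinant polynomials, and the openness step is the same (finitely many open polynomial conditions). The difference is in denseness: the paper perturbs an arbitrary $A$ along the fixed direction $P$ from Example~\ref{ex:B_gp} and studies the one-variable polynomial $p(\epsilon)=\det(\hat A+\epsilon\hat P)=\epsilon^{n}\det(\epsilon^{-1}\hat A+\hat P)$, using $\det\hat P\neq0$ to see that $p\not\equiv0$, so its positive roots do not accumulate at $0$ and an explicit $\epsilon_A>0$ exists with $A+\epsilon P$ admissible for all $0<\epsilon<\epsilon_A$. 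You instead use $P$ only as a witness that each multivariate determinant polynomial $q$ is not identically zero, and then invoke the standard fact that the real zero locus of a nonzero polynomial is closed with empty interior, so the bad set is a finite union of closed nowhere dense sets. Your version is shorter and yields slightly more (the exceptional set sits inside a proper algebraic subvariety, hence is also Lebesgue-null); the paper's version is more constructive in that it exhibits an explicit admissible perturbation. Part (b) is handled identically in both.

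One caveat, which you inherit from the paper rather than introduce: the claimed equivalence ``general position $\iff$ all $q\neq0$'' fails in the ``only if'' direction. Two distinct tuples can produce the same vector (e.g.\ $A_\ell\circ A_\ell=A_\ell$ for binary $A$), which kills the corresponding determinant without the \emph{set} $\bigcup_{d=1}^{N}\mathcal{H}_A^{d}$ losing general position; indeed, at such matrices the literal good set need not even be open. What your argument (and, in effect, the paper's) establishes as open and dense is the set $\{A\mid q(A)\neq0\ \text{for all}\ q\}$, i.e.\ the matrices for which the Hadamard products indexed by distinct tuples are pairwise distinct and any $n$ of them are linearly independent. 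Since only the ``if'' direction is used, and since this smaller set is exactly what Theorem~\ref{thm:generic} needs, nothing downstream is affected; just weaken your ``if and only if'' to ``if''.
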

\begin{proof}
(a) We first fix
$d\in\mathbb{N}$.  Let $A\in \mathbb{R}^{n \times m}$. To test
whether $\mathcal{H}^d_{A}$ is in general position amounts to
considering determinants of a finite number of $n\times n$
matrices with pairwise different columns from $\mathcal{H}^d_{A}$. Each of these
determinants is a multivariate polynomial in the entries of $A$ and
thus continuous in $A$. If therefore all the determinants are nonzero
for the given $A$, then the same holds on an open neighbourhood of
$A$. This proves the openness statement.\\ To prove denseness, choose a
matrix $P$ such that $\mathcal{H}^d_{P}$ is in general position
(e.g., as in Example~\ref{ex:B_gp}). For an arbitrary
$A\in\mathbb{R}^{n\times m}$ we will construct a number
$\epsilon_A>0$, such that $\mathcal{H}^d_{A+\epsilon P}$ is in general
position for all $\epsilon$ with $0<\epsilon<\epsilon_A$.\\
  As before, we consider the determinants of all $n\times n$ matrices with pairwise different
columns from $\mathcal{H}^d_{A+\epsilon P}$.
  Let $\hat A$ be an $n\times n$ submatrix of $A$ and $\hat P$ the corresponding submatrix of $P$.
  The determinant
  \begin{equation*}
    \det (\hat A + \epsilon \hat P) = \epsilon^n \det\left(\frac{1}{\epsilon}\hat A + \hat P \right) =: p(\epsilon)
  \end{equation*}
  is a polynomial in $\epsilon$. Note that
  $\frac1{\epsilon^n}p(\epsilon)=\det(\frac1\epsilon\hat A+\hat
  P)\stackrel{\epsilon\to \infty}\longrightarrow \det\hat P$, which is nonzero, because $P$
  is in general position.
  Hence $p$ is not the zero polynomial
  and in particular its positive real roots do not accumulate at zero, i.e.,
  \begin{align*}
\epsilon_{\hat A}&:=\inf\{\lambda\;\big|\;
  \lambda>0,\; p(\lambda)=0\}>0\;,
  \end{align*}
  where possibly $\epsilon_{\hat A}=\infty$. Since the number of submatrices of $A$ is finite, also
  \begin{align*}
\epsilon_A&=\inf\{\epsilon_{\hat A}\;\big|\; \hat A \text{ is an
    $n\times n$ submatrix of $A$}\}>0\;.
  \end{align*}
Thus all $n\times n$
  submatrices of $A+\epsilon P$ are nonsingular, provided
  $0<\epsilon<\epsilon_A$. This concludes the proof of (a).\\
(b) It follows from (a) that the set of matrices $A\in
    \mathbb{R}^{n \times m}$ for which $\bigcup_{d=1}^N\mathcal{H}_A^d$ is in
    general position is a finite intersection of open dense subsets
    and therefore open and dense itself.  The complement of an open and dense set is nowhere dense.
 Therefore the set of matrices $A\in
    \mathbb{R}^{n \times m}$, for which $\mathcal{H}_A$ is not in
    general position, is a countable union of nowhere dense sets. 
    By definition, the set is thus of the first category.
\end{proof}
We will also need the following general statement on matrices of fixed
rank~$r$. Let us consider 
\begin{align}
M_r&=\{ A\in\mathbb{R}^{n\times
    m}\;\big|\; \rank A=r\}
  \label{eq:Mr}
\end{align}
 with the topology inherited from
  $\mathbb{R}^{n\times m}$.
\begin{lemma}\label{lemma:factor_generic}
  For arbitrary $m,n$, let $\mathcal{S}_{n,m}$  be an open and dense subset of
  $\mathbb{R}^{n\times m}$.  Then
  \begin{align*}
\mathcal{S}_{n,r}\mathcal{S}_{r,m}&=    \{ CD\;\big|\; C\in \mathcal{S}_{n,r}, D\in \mathcal{S}_{r,m}\} 
  \end{align*}
  is open and dense in $M_r$.
\end{lemma}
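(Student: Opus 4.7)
The plan is to prove denseness by small perturbations of a rank factorization of any $A\in M_r$, and to prove openness by constructing an explicit continuous local section of the product map $(C,D)\mapsto CD$ through any prescribed factorization $(C_0,D_0)\in \mathcal{S}_{n,r}\times \mathcal{S}_{r,m}$ of a base point $A_0 = C_0D_0\in M_r$. Both parts will only use that $\mathcal{S}_{n,r}$ and $\mathcal{S}_{r,m}$ are open and dense in the corresponding ambient spaces.

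For denseness, I would fix $A\in M_r$ together with a rank-$r$ factorization $A = C_0 D_0$ where $C_0\in\mathbb{R}^{n\times r}$ and $D_0\in \mathbb{R}^{r\times m}$ both have rank $r$. Density of $\mathcal{S}_{n,r}$ and of $\mathcal{S}_{r,m}$ yields sequences $C_k\to C_0$ in $\mathcal{S}_{n,r}$ and $D_k\to D_0$ in $\mathcal{S}_{r,m}$. Lower semicontinuity of rank (combined with the upper bound $r$) forces $\rank C_k = \rank D_k = r$ for all sufficiently large $k$, so $C_k D_k\in M_r\cap\mathcal{S}_{n,r}\mathcal{S}_{r,m}$ and $C_k D_k\to A$, proving density of $\mathcal{S}_{n,r}\mathcal{S}_{r,m}\cap M_r$ in $M_r$.

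For openness, take $A_0 = C_0D_0$ as above; since $\rank A_0 = r$, necessarily $\rank C_0 = \rank D_0 = r$. I would choose index sets $I\subset\{1,\dots,n\}$ and $J\subset\{1,\dots,m\}$ of size $r$ such that the submatrix $A_0[I,J]$ is invertible, which exists because $\rank A_0 = r$. For $A$ in some open neighbourhood $W$ of $A_0$ in $\mathbb{R}^{n\times m}$ the submatrix $A[I,J]$ stays invertible, and for $A\in W\cap M_r$ the standard rank-$r$ identity
\begin{equation*}
A \;=\; A[\,\cdot\,,J]\,\bigl(A[I,J]\bigr)^{-1} A[I,\,\cdot\,] \;=:\; \hat C(A)\,\hat D(A)
\end{equation*}
yields a factorization in which $\hat C$ and $\hat D$ depend continuously on $A$. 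Since $\hat C(A_0)\hat D(A_0) = A_0 = C_0 D_0$ with all four factors of rank $r$, there is a unique $G\in \mathrm{GL}_r(\mathbb{R})$ with $C_0 = \hat C(A_0)G^{-1}$ and $D_0 = G\hat D(A_0)$. Setting $\tilde C(A) = \hat C(A) G^{-1}$ and $\tilde D(A) = G\hat D(A)$ then produces a continuous local section with $(\tilde C(A_0),\tilde D(A_0)) = (C_0,D_0)$. By continuity and the openness of $\mathcal{S}_{n,r}$ and $\mathcal{S}_{r,m}$, a sufficiently small neighbourhood $V\subset W\cap M_r$ of $A_0$ satisfies $\tilde C(A)\in \mathcal{S}_{n,r}$ and $\tilde D(A)\in \mathcal{S}_{r,m}$ for every $A\in V$, so $A = \tilde C(A)\tilde D(A)\in \mathcal{S}_{n,r}\mathcal{S}_{r,m}$, which is the required openness.

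The only non-routine step is obtaining a continuous local factorization of rank-$r$ matrices near $A_0$ that actually passes through the prescribed pair $(C_0,D_0)$. The Schur-complement-type formula above supplies the continuous factorization explicitly, and the $\mathrm{GL}_r$-adjustment by $G$ takes care of the non-uniqueness of rank factorizations; everything else is routine use of continuity, lower semicontinuity of rank, and the hypothesis that $\mathcal{S}_{n,r}$ and $\mathcal{S}_{r,m}$ are open and dense.
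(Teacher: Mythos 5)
Your proof is correct and follows essentially the same route as the paper: the skeleton factorization $A=A[\,\cdot\,,J]\bigl(A[I,J]\bigr)^{-1}A[I,\,\cdot\,]$ built on a nonsingular $r\times r$ submatrix is exactly the paper's parametrization $C=\Pi_1\bigl[\begin{smallmatrix}A_{11}\\A_{21}\end{smallmatrix}\bigr]T$, $D=T^{-1}[\,I\ \ A_{11}^{-1}A_{12}\,]\Pi_2^T$, with your $\mathrm{GL}_r$-adjustment $G$ playing the role of the paper's free parameter $T$. The denseness argument is likewise the same, with your check that the perturbed product stays in $M_r$ being a welcome extra detail.
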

\begin{proof}
If $A\in M_r$, then it possesses a nonsingular $r\times r$ submatrix.
For suitable permutation matrices, we have $\Pi_1^TA\Pi_2=\left[
  \begin{array}{cc}
    A_{11}&A_{12}\\A_{21}&A_{22}
  \end{array}
\right]$ with $A_{11}\in\mathbb{R}^{r\times r}$ nonsingular. Since
$\rank A=r$, 
we have
$\left[
  \begin{array}{c}
    A_{12}\\A_{22}
  \end{array}
\right]=\left[
  \begin{array}{c}
    A_{11}\\A_{21}
  \end{array}
\right]A_{11}^{-1}A_{12}$ and
\begin{align}\label{eq:CTB_factor}
  A&=
       CD\quad\text{ with }\quad C=\Pi_1\left[
  \begin{array}{c}
    A_{11}\\A_{21}
  \end{array}
\right]T, \; D=T^{-1}\left[
  \begin{array}{cc}
    I&A_{11}^{-1}A_{12}
  \end{array}
\right]\Pi_2^T\;,
\end{align}
where $T\in\mathbb{R}^{r\times r}$ is an arbitrary nonsingular matrix.
Moreover, all possible factorizations with $C\in\mathbb{R}^{n\times
  r}$, $D\in\mathbb{R}^{r\times
  m}$ are of this form.\\
Hence, if $A=CD$ with $C\in \mathcal{S}_{n,r}$, $D\in
\mathcal{S}_{r,m}$, then $C$ and $D$ are as in \eqref{eq:CTB_factor}
with a given $T=T_A$. Consider now $\tilde A$ with $\rank
\tilde A=r$ and $\|\tilde A-A\|<\epsilon$. Then also $\tilde A=\tilde
C\tilde D$ with $\tilde C=\Pi_1\left[
  \begin{array}{c}
    \tilde A_{11}\\\tilde A_{21}
  \end{array}
\right]T_A$ and $\tilde D=T_A^{-1}\left[
  \begin{array}{cc}
    I&\tilde A_{11}^{-1}\tilde A_{12}
  \end{array}
\right]\Pi_2^T$. For sufficiently small $\epsilon>0$, it follows that
$\tilde C\in \mathcal{S}_{n,r}$ and $\tilde D\in
\mathcal{S}_{r,m}$. This proves openness of $\mathcal{S}_{n,r}\mathcal{S}_{r,m}$.\\
Denseness is inherited from the factors, because in any neighbourhood of a matrix $A=CD$ there exists a
matrix $\tilde A=\tilde C\tilde D$ with $\tilde C\in \mathcal{S}_{n,r}$ and $\tilde D\in
\mathcal{S}_{r,m}$.
\end{proof}
\begin{thm}\label{thm:generic}
  For all $r\le \min\{m,n\}$ there exists an open and dense subset
    $\mathcal{A}_r$ of $M_r\subset \mathbb{R}^{m\times n}$ such that
  for all
  $A\in\mathcal{A}_r$ and all $d\in\mathbb{N}$ the rank of the $d$-th
  Hadamard power equals
  \begin{align}\label{eq:Hadamard_rank_formula}
    \rank A^{\circ d} &=\min\left\{\binom{r+d-1}{d},n,m\right\}\;.
  \end{align}
\end{thm}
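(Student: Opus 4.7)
The plan is to reduce $\rank A^{\circ d}$ to a rank computation for an explicit ``Veronese-like'' product and then to establish matching upper and lower bounds by exhibiting one concrete matrix that realises the maximum.

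First, for $A \in M_r$, by Lemma~\ref{lemma:factor_generic} we write $A = CD$ with $C \in \mathbb{R}^{n \times r}$ and $D \in \mathbb{R}^{r \times m}$. Applying identity~\eqref{eq:hadamard_power_calc} and collecting summands by multiset (using the commutativity of $\circ$), one obtains
\begin{equation*}
  A^{\circ d} = \tilde C \, \Lambda_d \, \tilde D^T,
\end{equation*}
where $K_d := \binom{r+d-1}{d}$, $\tilde C \in \mathbb{R}^{n \times K_d}$ has columns $C_{\ell_1}\circ\cdots\circ C_{\ell_d}$ for $1 \le \ell_1\le\cdots\le\ell_d \le r$, $\tilde D \in \mathbb{R}^{m \times K_d}$ is the analogous matrix built from the rows of $D$, and $\Lambda_d$ is diagonal with positive multinomial entries. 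This immediately yields the upper bound $\rank A^{\circ d} \le \min\{n,m,K_d\}$.

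For the matching lower bound I would exhibit a single $A_0 \in M_r$ attaining the bound for every $d$ simultaneously. Assume without loss of generality $n \le m$ (otherwise transpose). By Example~\ref{ex:B_gp} together with Lemma~\ref{lemma:Hsetsgeneric}(b), choose $C_0 \in \mathbb{R}^{n \times r}$ with $\mathcal{H}_{C_0}$ in general position, and set $D_0 = [C_0^T \mid D_0']$ for an arbitrary $D_0' \in \mathbb{R}^{r \times (m-n)}$. Then $A_0 = C_0 D_0 \in M_r$, and its leading $n$ columns form the block $(C_0 C_0^T)^{\circ d} = \tilde C_0 \Lambda_d \tilde C_0^T$. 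Since $\Lambda_d$ is positive definite, the rank of this block equals $\rank \tilde C_0$, which is precisely $\min\{n,K_d\}$ by general position (if $K_d \le n$ all $K_d$ columns are linearly independent, otherwise any $n$ of them span $\mathbb{R}^n$). Consequently $\rank A_0^{\circ d} \ge \min\{n,K_d\} = \min\{n,m,K_d\}$ for every $d$.

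For each fixed $d$, the set $\mathcal{A}_r^{(d)} \subset M_r$ on which the rank attains this maximum is characterised by the nonvanishing of at least one minor of $A^{\circ d}$ of size $\min\{n,m,K_d\}$, i.e.\ by a polynomial condition on the entries of $A$. Since $M_r$ is irreducible as a real-algebraic variety and the relevant polynomial does not vanish identically on $M_r$ (as witnessed by $A_0$), the set $\mathcal{A}_r^{(d)}$ is Zariski open and dense in $M_r$, and therefore also open and dense in the Euclidean topology.

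The main obstacle is combining all the $d$-conditions into a single open and dense subset. The value $\min\{n,m,K_d\}$ is non-decreasing in $d$ and stabilises at $\min\{n,m\}$ for $d \ge d^* := \min\{d : K_d \ge \min\{n,m\}\}$, so only finitely many distinct rank values occur. Intersecting $\mathcal{A}_r^{(d)}$ over $d \le d^*$ is then a finite intersection of open dense sets and still open and dense; the delicate point is to reduce the remaining tail conditions $\rank A^{\circ d} = \min\{n,m\}$ for $d > d^*$ to finitely many polynomial conditions on $A$. I would attempt this either by a Noetherian stabilisation argument on the Zariski-closed bad loci $M_r \setminus \mathcal{A}_r^{(d)}$, or by applying Proposition~\ref{prop:HornYang}(b) to the nonnegative definite matrix $A^{\circ d}(A^{\circ d})^T$, whose rank coincides with $\rank A^{\circ d}$ and whose Hadamard-power behaviour stabilises generically at a single index. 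This stabilisation step is the part I expect to require the most care.
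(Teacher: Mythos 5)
Your upper bound $\rank A^{\circ d}\le\min\{n,m,\binom{r+d-1}{d}\}$ and your witness $A_0=C_0[C_0^T\mid D_0']$ are sound, and up to the per-$d$ statement your route is a legitimate alternative to the paper's: where the paper constructs the generic set explicitly as $\mathcal{S}_{n,r}\mathcal{S}_{r,m}$ from general-position conditions on the factors (Lemmas~\ref{lemma:Hsetsgeneric} and \ref{lemma:factor_generic}) and obtains density by the explicit perturbation $A+\epsilon P$, you obtain density of each $\mathcal{A}_r^{(d)}$ from irreducibility of the closure of $M_r$ together with a nonvanishing minor witnessed at $A_0$. That can be made to work, but you should justify why Zariski density implies Euclidean density inside $M_r$ (for instance: a polynomial not identically zero on the connected smooth manifold $M_r$ has nowhere dense zero set there; the case $r=n=m$, where $M_r$ is the disconnected set of invertible matrices, needs a separate word).

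The genuine gap is the one you flag yourself: you never exhibit a single open and dense subset of $M_r$ on which \eqref{eq:Hadamard_rank_formula} holds for \emph{all} $d\in\mathbb{N}$. An infinite intersection of open dense sets is in general only residual (this is exactly the content of Lemma~\ref{lemma:Hsetsgeneric}(b)), so without the stabilization step the theorem as stated is not proved. Of your two proposed repairs, the second fails as written: Proposition~\ref{prop:HornYang}(b) concerns the Hadamard powers $B^{\circ d}$ of one \emph{fixed} nonnegative definite matrix $B$, whereas $A^{\circ d}(A^{\circ d})^T$ changes with $d$ and is not the $d$-th Hadamard power of any fixed matrix, so no stabilization in $d$ can be extracted from it. The Noetherian route can be completed (the ascending chain of relatively Zariski-closed bad loci $\bigcup_{d\le N}\bigl(M_r\setminus\mathcal{A}_r^{(d)}\bigr)$ stabilizes, so the full intersection equals a finite one and remains open and dense), but this is heavier machinery than the paper uses: there $\mathcal{A}_r$ is cut out by finitely many conditions from the outset --- general position of the Hadamard products of the columns of $C$ only up to order $n$ and of the rows of $D$ only up to order $m$ --- and the remaining exponents $d\ge\min\{n,m\}$ are dispatched via the elementary bound $\binom{r+d-1}{d}\ge d+1$. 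Until you carry out one of these stabilization arguments in full, the proof is incomplete.
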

\begin{proof}
If $\rank A=0$ or $\rank A=1$, then $ \rank A^{\circ
  d} =\rank A$ for all $d=1,2,\ldots$. This
is consistent with the convention that $\binom{d-1}{d}=0$ and
$\binom{d}{d}=1$. Hence we can set $\mathcal{A}_0=M_0=\{0\}$ and
$\mathcal{A}_1=M_1$.\\
For $r\ge 2$ and arbitrary $m,n\ge r$, we set
\begin{align*}
\mathcal{S}_{n,r}&=\{C\in\mathbb{R}^{n\times
                   r}\;\big|\;\bigcup_{d=1}^n \mathcal{H}_C^d\text{ is
                   in general position}\}\\
  \mathcal{S}_{r,m}&=\{D\in\mathbb{R}^{r\times
                   m}\;\big|\;\bigcup_{d=1}^m \mathcal{H}_{D^T}^d\text{ is
                   in general position}\}\;.
\end{align*}
By Lemma~\ref{lemma:Hsetsgeneric} both sets are open and dense in
their spaces, and by Lemma~\ref{lemma:factor_generic} the set
$\mathcal{A}_r=\mathcal{S}_{n,r}\mathcal{S}_{r,m}$ is open and dense
in $M_r$. 
Every $A\in M_r$ can be written as $A=CD$, where $C=[C_1,\ldots,C_r]\in \mathcal{S}_{n,r}$ and $D=[D_1,\ldots,D_r]^T\in
\mathcal{S}_{r,m}$.
Equation~\eqref{eq:hadamard_power_calc} yields
\begin{align*}
  A^{\circ d}&=\sum_{\ell_1,\ldots,\ell_d=1}^r (C_{\ell_1}\circ\cdots \circ C_{\ell_d})(D_{\ell_1}\circ\cdots \circ D_{\ell_d})^T\;.
\end{align*}
For all $d< \min\{n,m\}$ and all $d$-tuples
$1\le\ell_1\le\ell_2\le\cdots\le \ell_d\le r$ the vectors $
C_{\ell_1}\circ\cdots \circ C_{\ell_d}\in\mathbb{R}^n$ and the vectors $ D_{\ell_1}\circ\cdots \circ D_{\ell_d}\in\mathbb{R}^m$
are  in general position. Therefore as long as $\rank A^{\circ d}<
\min\{n,m\}$ it equals the
number of $d$-tuples
$(\ell_1,\ldots,\ell_{d})$ with $1\le\ell_1\le\ell_2\le\cdots\le
\ell_d\le r$. 
Choosing such a $d$-tuple is an instance of unordered sampling with replacement.
Consequently the number of choices equals $\binom{r+d-1}{d}$, see~\cite[Thm.\ 2.1]{Pish14}.\\
Since $\binom{r+d-1}{d}\ge \binom{d+1}{d}=d+1$, for $d\ge \min\{n,m\}$
the rank of $A^{\circ d}$ is guaranteed to be maximal and we do not need
the assumption on the general position for these $d$. Therefore
\eqref{eq:Hadamard_rank_formula} holds for all $A\in\mathcal{A}_r$ and
all $d\in\mathbb{N}$.
\end{proof}
\begin{remark}\label{rem:abc}
  \begin{itemize}
  \item[(a)] In the proof, we have used the bound
    $\binom{r+d-1}{d}\ge d+1$, which is sharp only for $r=2$. For
    larger $r$ and a given $n$ it is useful to find a small $d$
    satisfying $\binom{r+d-1}{d}\ge n$. Note that
    \begin{align*}
      \binom{r+d-1}{d}&=\frac{\prod_{j=1}^{r-1}(d+j)}{(r-1)!}\ge \frac{(d+1)^{r-1}}{(r-1)!}\;.
    \end{align*}
    Hence, it suffices to choose $d\ge \sqrt[r-1]{n(r-1)!}-1$.
    \item[(b)] For $r=\min\{n,m\}$ the set $\mathcal{A}_r$ is open and
      dense in $\mathbb{R}^{m\times n}$. Therefore the set
      $\mathcal{A}=\bigcup_{r=0}^{\min\{n,m\}}\mathcal{A}_r$ is also
      dense in $\mathbb{R}^{m\times n}$, but not open. For all
      $A\in\mathcal{A}$ and all $d\in\mathbb{N}$, we have $\rank A^{\circ d}
      =\min\left\{\binom{\rank A+d-1}{d},n,m\right\}$.
  \end{itemize}
\end{remark}

\subsection{Non-integer Hadamard powers}
\label{sec:non-integer-hadamard}
       It is also an interesting task to analyze the rank of
       \emph{non-integer} Hadamard powers. These are defined after taking absolute values of
       each entry see, e.g.,~\cite{BhatElsn07}. For $d\in
       \mathbb{R}$,
       $d\ge0$ and $A=(a_{ij})\in\mathbb{R}^{m\times n}$ we set
       $|A|^{\circ d}=(|a_{ij}|^d)$.
       Actually, our investigations were motivated by the application to kernel
       perceptrons in the next section. The case of non-integer $d$,
       however, is not related to positive kernels, because $A\succeq
       0$ does not imply $|A|\succeq 0$, see \cite[Problem
       7.5.P4]{HornJohn12}, or \cite{BhatElsn07}. Nevertheless, we are happy to take up the
       suggestion of an anonymous referee and extend Theorem
         \ref{thm:generic} in this new direction.

         Numerical experiments suggest that for almost all
         $d>0$ the rank of $|A|^{\circ d}$ is maximal, if 
       $A\in M_r\subset\mathbb{R}^{n\times m}$ with $r\ge 2$.
       \begin{ex}
        Consider the Boolean matrix from Example
         \ref{ex:Bcasen3} without the first zero column, i.e.,
         $\hat X=\left[
           \begin{smallmatrix}
             1&0&1&0&1&0&1\\
             0&1&1&0&0&1&1\\
             0&0&0&1&1&1&1
           \end{smallmatrix}\right]$.
         The condition number of $\hat K^{\circ d}=({\hat X}^T{\hat X})^{\circ d}$
         for $d>0$ is shown in Fig.~\ref{fig:condKd} on the
         left. The matrix is singular only for $d\in\{1,2\}$.

         As another (arbitrary) example, we use a Hankel matrix
         containing the first Fibonacci numbers,
         $H=\left[\begin{smallmatrix}
             1&2&3&5&8\\
             2&3&5&8  &  13\\
             3&5&8&13&21\\
             5&8&13&21&34\\
             8&13&21&34&55
           \end{smallmatrix}\right]$. This matrix has rank
         $2$  because the sum of any two neighbouring columns equals the
         next column to the right. The Hadamard powers $H^{\circ d}$
         apparently are singular only for
         $d\in\{1,2,3\}$. Their condition numbers are shown in the right
         plot.
         \begin{figure}[h]
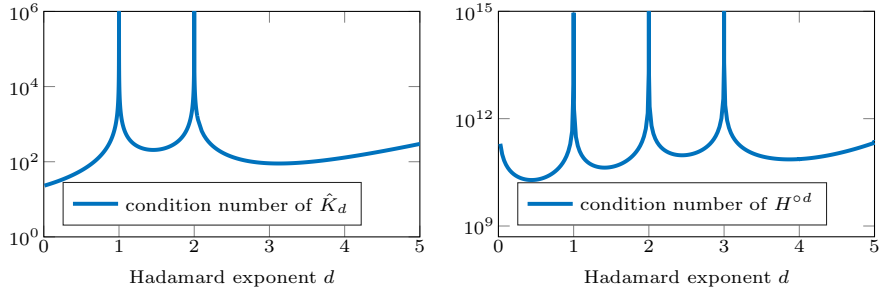
\centering
           \begin{minipage}{.45\linewidth}
             \input{./condKd.tex}
           \end{minipage} \quad
           \begin{minipage}{.45\linewidth}
             \input{./condHd.tex}
           \end{minipage}
           \caption{Non-integer Hadamard powers}\label{fig:condKd}
         \end{figure}
       \end{ex}
To substantiate these observations, we show that in a generic sense
almost all
non-integer Hadamard powers have maximal rank.
\begin{cor}
         Let $\mathcal{A}_r\subset M_r\subset\mathbb{R}^{n\times m}$
         be defined as in Theorem \ref{thm:generic}.  If $r\ge 2$ and
         $|A|\in\mathcal{A}_r$, then $\rank
         |A|^{\circ d}=\min\{m,n\}$  except for at most a finite number of $d>0$. 
       \end{cor}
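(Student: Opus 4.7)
The plan is to reduce the statement to a one-variable analyticity argument for an exponential sum. First I would apply Theorem \ref{thm:generic} to $|A|\in\mathcal{A}_r$: since $\binom{r+d-1}{d}$ is polynomial of degree $r-1\ge 1$ in the integer $d$, it eventually exceeds $\min\{n,m\}$, so there is an integer $d_0$ with $\rank|A|^{\circ d_0}=\min\{n,m\}=:s$. I would then fix row and column index sets $I,J$ with $|I|=|J|=s$ such that the $s\times s$ submatrix $(|A|^{\circ d_0})_{I,J}$ is nonsingular.

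Next, for real $d>0$, I would consider
\begin{align*}
  f(d)&=\det\bigl((|A|^{\circ d})_{I,J}\bigr)
       =\sum_{\sigma\in S_s}\sign(\sigma)\prod_{k=1}^{s}|a_{i_k,j_{\sigma(k)}}|^{d}.
\end{align*}
Grouping the summands by the common value of the product and discarding those whose product is zero (they contribute nothing for $d>0$), this rewrites as an exponential sum $f(d)=\sum_{\nu}\beta_\nu\gamma_\nu^{d}$ with finitely many distinct positive bases $\gamma_\nu$ and integer coefficients $\beta_\nu$. The choice of $I,J$ ensures $f(d_0)\ne 0$, so $f$ is not identically zero and at least one $\beta_\nu$ is nonzero.

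The crucial step is to show that $f$ has only finitely many zeros in $(0,\infty)$. Since $f(d)=\sum_\nu\beta_\nu e^{d\log\gamma_\nu}$ is entire in $d$, its real zeros are isolated; what remains is to rule out accumulation at $+\infty$. Letting $\gamma_*$ denote the largest $\gamma_\nu$ among those with $\beta_\nu\ne0$, I would divide by $\gamma_*^{d}$: the quotient equals $\beta_*+\sum_{\gamma_\nu<\gamma_*}\beta_\nu(\gamma_\nu/\gamma_*)^d$, which tends to the nonzero limit $\beta_*$ as $d\to\infty$. Hence $f(d)\ne 0$ for all sufficiently large $d$, so the zero set of $f$ in $(0,\infty)$ is bounded and discrete, therefore finite.

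Finally, for every $d>0$ outside this finite exceptional set, the submatrix $(|A|^{\circ d})_{I,J}$ is nonsingular, forcing $\rank|A|^{\circ d}\ge s$, hence $=\min\{n,m\}$. I expect the only genuine obstacle to be the non-accumulation of zeros at $+\infty$, which is handled cleanly by the dominating-term estimate above; the initial reduction via Theorem \ref{thm:generic} and the Leibniz expansion are routine bookkeeping.
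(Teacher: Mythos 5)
Your proposal is correct and follows essentially the same route as the paper: apply Theorem \ref{thm:generic} at a single integer exponent to obtain a nonsingular $\min\{n,m\}\times\min\{n,m\}$ submatrix, expand its determinant as a finite exponential sum $f(d)=\sum_\nu\beta_\nu\gamma_\nu^{d}$ in the real variable $d$, and conclude that $f$ has only finitely many positive zeros. The one place you diverge is the last step: the paper simply cites the classical result (P\'olya--Szeg\H{o}, Problem~75 in Part~V) that an exponential sum with $N$ terms has at most $N-1$ real zeros, which yields an explicit bound on the number of exceptional exponents, whereas you give a self-contained argument combining the identity theorem for the entire function $f$ (no finite accumulation point of zeros, which also covers accumulation at $0$) with a dominant-term estimate ruling out accumulation at $+\infty$. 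Both are valid; your version avoids the external reference at the cost of losing the quantitative bound $N-1\le n!-1$ on the exceptional set.
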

       \begin{proof}
         Without loss of generality let $m\ge n$. From Theorem
         \ref{thm:generic}, we know that $\rank |A|^{\circ n}=n$.
         Hence there exists an $n\times n$ submatrix $\hat A=(\hat a_{ij})$ of $A$
         such that $f(d)=\det |\hat A|^{\circ d}$ is nonzero for $d=n$.
         The function $f:\;]0,\infty[\;\to\mathbb{R}$ is a
         multivariate polynomial in all $|\hat a_{ij}|^d= e^{\log(|\hat a_{ij}|) d}$ for which
         $\hat a_{ij}\neq 0$. More precisely, we can write $f(d)=\sum_{\ell=1}^Ne^{\alpha_\ell d}$
         with $N\le n!$ and coefficients $\alpha_\ell\in\mathbb{R}$ given as sums of suitable
         $\log(|\hat a_{ij}|)$. By \cite[Problem~75 in Part V]{PolySzeg76}  such a function $f$ has at most $N-1$ real zeros.
         %
       \end{proof}
\section{Kernel perceptrons and Hadamard powers}
\label{sec:kernel-perceptrons}
We apply the results obtained in Sections~\ref{sec:class-dich-with}
and \ref{sec:rank-hadamard-powers} 
in the context of kernel perceptrons.
For that purpose, we collect basic facts from \cite{SchoSmol02}.
A symmetric mapping $k:\mathbb{R}^n\times
\mathbb{R}^n\to\mathbb{R}$ is called a \emph{positive kernel}, if any
$m\in \mathbb{N}$ with $m>0$ and any set of vectors
$x_1,\ldots,x_m\in\mathbb{R}^n$ yields a nonnegative definite  matrix
$K_k=\left(k(x_i,x_j)\right)_{i,j=1,\ldots, m}$, i.e., $K_k \succeq 0$.
The most natural example of a positive kernel is the canonical scalar product
$\langle\cdot,\cdot\rangle$, where as previously we set 
\begin{align}\label{eq:defK}
  K&=K_{\langle\cdot,\cdot\rangle}=\left(\langle x_i,x_j\rangle\right)_{i,j=1,\ldots, m}=X^TX\;.
\end{align}
Given two
positive kernels $k_1$ and $k_2$ with the matrices $K_{k_1},K_{k_2}\succeq 0$
for a fixed set of vectors $x_j$,  the product $k=k_1k_2$ is
associated to the Hadamard product $K_{k_1}\circ K_{k_2}$.
Thus, by equation \eqref{eq:AcircBge0}  the product of two positive kernels and any
natural power of a positive kernel  is again a positive kernel. Hence, also the polynomial mappings
$(v,w)\mapsto (\langle v,w\rangle+c)^d$, with $c\ge 0$, $d\in\mathbb{N}$
define positive kernels. In the following we deal with such
polynomial kernels.
To simplify the notation, we consider only the case $c=0$, i.e.,
the kernels $k(v,w)=\langle v,w\rangle^d$ for which $K_k=K^{\circ d}$
with $K$ from \eqref{eq:defK}. The results carry over to the case $c>0$.

Let now a set of vectors $\{x_1,\ldots, x_m\}\subset\mathbb{R}^n$ 
with corresponding labels $y_1,\ldots,y_m$ in $\{\pm 1\}$ be fixed. In
the language of pattern recognition, the $x_j$ represent
\emph{features} in the \emph{feature space} $\mathbb{R}^n$.

A classification problem consists in finding a mapping
$f_c:\mathbb{R}^n\to\{\pm 1\}$, called a \emph{classifier}, such that $f_c(x_i)=y_i$ for
$i=1,\ldots,m$. Each classifier $f_c$ partitions the feature space into
the classes $f_c^{-1}(1)$ and $f_c^{-1}(-1)$. 
A polynomial \emph{kernel perceptron} of degree $d\in\mathbb{N}$, $d\ge1$ is a particular classifier defined by
$f_p:\mathbb{R}^n\to\{\pm 1\}$ that is of the form
\begin{align}\label{eq:decision_fct}
f_p(x)&=\sigma\left(\sum_{i=1}^m \alpha_iy_i \langle x_i,x\rangle^d +b
        \right)\quad \text{with}\quad \sigma(z)=\left\{
\begin{array}{rl}
  1,&z\ge 0\\-1,&z<0
\end{array}\right.\;.
\end{align}
For $d=1$ this is a classical perceptron. The function $\sigma$ is the
\emph{activation function}, $b\in\mathbb{R}$ is the \emph{bias}, and $f_p$ is also called \emph{transfer
  function} of the (kernel) perceptron. 
The task is to construct  the real parameters 
$b$ and $\alpha_i$, such that
\begin{align}\label{eq:task}
f_p(x_j)=y_j\text{ for all } j=1,\ldots,m\;.
\end{align}

\subsection{Boolean functions}
\label{sec:boolean-functions}

In the particular case, where $m=2^n$ and
$\{x_1,\ldots,x_m\}=\{0,1\}^n$, the selection of labels $y_i\in\{\pm 1\}$
defines a Boolean function $f:\{0,1\}^n\to \{\pm 1\}$, $f(x_i)=y_i$ in $n$ variables.
One says that a kernel perceptron \emph{realizes} the Boolean function $f$,
if its transfer function $f_p$ coincides with $f$ on $\{0,1\}^n$.
The construction of a kernel perceptron can be reformulated as an
optimization problem. 

\begin{thm}\label{thm:task_feasible}
  The given task~\eqref{eq:task} is feasible,
  if and only if the constrained maximization problem
\begin{subequations}\label{eq:CMP}
  \begin{equation} \label{eq:CMP_target}
    \max_{a\in\mathbb{R}^m} \left(\|a\|_ 1-\frac12
      \sum_{i,j=1}^ma_ia_jy_iy_j \langle x_i,x_j\rangle^d\right)
\end{equation}
\begin{equation}\label{eq:CMP_constraint}
  \text{subject to } \sum_{i=1}^m y_ia_i=0, a_1,\ldots,a_m\ge 0
\end{equation}
\end{subequations}
  has a solution.
  In this case $\alpha_i=a_i$
   for $i=1,\ldots,m$ and
   \begin{align*}
b=\frac12\left(\min_{\{j\;|\;y_j=1\}} \sum_{i=1}^m \alpha_iy_i \langle
     x_i,x_j\rangle^d+\max_{\{j\;|\;y_j=-1\}} \sum_{i=1}^m \alpha_iy_i
     \langle x_i,x_j\rangle^d\right)
   \end{align*}
   are suitable.
 \end{thm}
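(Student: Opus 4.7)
The plan is to recognize the statement as the classical Wolfe-duality result for hard-margin kernel perceptrons (following \cite{SchoSmol02}). Since $k(v,w)=\langle v,w\rangle^d$ is a positive kernel (by Schur and \eqref{eq:AcircBge0}), there is a feature map $\phi\colon\mathbb{R}^n\to\mathbb{R}^N$ (with $N$ finite, e.g.\ $N=\binom{n+d-1}{d}$ by Section~\ref{sec:rank-hadamard-powers}) realizing $\langle\phi(v),\phi(w)\rangle=\langle v,w\rangle^d$. Setting $w=\sum_i\alpha_iy_i\phi(x_i)$, the transfer function~\eqref{eq:decision_fct} becomes $f_p(x)=\sigma(\langle w,\phi(x)\rangle+b)$. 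The task~\eqref{eq:task} is thus the strict separation of the images $\{\phi(x_j)\mid y_j=1\}$ and $\{\phi(x_j)\mid y_j=-1\}$ by an affine hyperplane in $\mathbb{R}^N$. By a rescaling argument, this is equivalent to the existence of some $w,b$ satisfying the margin condition $y_j(\langle w,\phi(x_j)\rangle+b)\ge 1$ for all $j$.

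Next I would introduce the primal convex quadratic program
\begin{equation*}
  \min \tfrac12\|w\|^2 \quad\text{subject to}\quad y_j(\langle w,\phi(x_j)\rangle+b)\ge 1,\quad j=1,\ldots,m,
\end{equation*}
whose feasibility matches the margin condition above and which admits an optimal solution whenever feasible (convex QP with linear constraints, bounded below). The associated Lagrangian
\begin{equation*}
 L(w,b,a)=\tfrac12\|w\|^2-\sum_{j=1}^m a_j\bigl(y_j(\langle w,\phi(x_j)\rangle+b)-1\bigr),\qquad a_j\ge 0,
\end{equation*}
satisfies stationarity $\partial_wL=0$ and $\partial_bL=0$ if and only if $w=\sum_ja_jy_j\phi(x_j)$ and $\sum_ja_jy_j=0$. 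Substituting these back into $L$ eliminates $w$ and $b$ and yields precisely the Wolfe dual~\eqref{eq:CMP_target}--\eqref{eq:CMP_constraint}, where the kernel identity turns $\langle\phi(x_i),\phi(x_j)\rangle$ into $\langle x_i,x_j\rangle^d$. Strong duality for convex QPs (Slater is automatic since the constraints are affine) then gives that the primal has a solution iff the dual has a solution, with $\alpha_i=a_i$ taken from the dual optimum.

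It remains to obtain the bias. With $s_j:=\sum_i\alpha_iy_i\langle x_i,x_j\rangle^d=\langle w,\phi(x_j)\rangle$, correct classification requires $s_j+b\ge 0$ for $y_j=1$ and $s_j+b<0$ for $y_j=-1$; equivalently $b$ must lie in the interval with endpoints $-\min_{y_j=1}s_j$ and $-\max_{y_j=-1}s_j$. The formula in the statement is exactly the midpoint of this interval (up to the sign convention used in the paper), which is a valid choice precisely when the interval is nonempty, i.e.\ when the task is feasible.

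The main obstacles I anticipate are (i) carefully connecting the non-strict classification condition arising from $\sigma(0)=1$ with the margin-1 formulation needed for dualization (a short rescaling argument, but requiring a precise statement), and (ii) verifying that the dual indeed attains its supremum whenever the primal is feasible, since the feasible set of the dual is unbounded. The remaining manipulations (stationarity substitution, complementary slackness for the bias) are routine.
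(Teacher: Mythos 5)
Your proposal is correct and follows exactly the standard Wolfe-duality argument that the paper does not reproduce but simply outsources (its entire proof is the single line ``See \cite[Section 7.4]{SchoSmol02}''), so there is no substantive difference in approach to compare. One small bonus of actually carrying out the derivation: your computation gives the bias as $b=-\tfrac12\bigl(\min_{\{j\mid y_j=1\}}s_j+\max_{\{j\mid y_j=-1\}}s_j\bigr)$ with $s_j=\sum_i\alpha_iy_i\langle x_i,x_j\rangle^d$, and the paper's own XOR example (where $s=[0,2,2,0]^T$ and $b=-1$) confirms that the sign in the stated formula is a typo, so your parenthetical ``up to the sign convention'' is in fact pointing at a genuine error in the theorem as printed.
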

 \begin{proof}
 See \cite[Section 7.4]{SchoSmol02}.
\end{proof}
Setting $y=[y_1,\ldots,y_m]\in\mathbb{R}^{1\times m}$,
$Y=\diag(y_1,\ldots,y_m) \in\mathbb{R}^{m\times m}$ and using $K$ from
\eqref{eq:defK}, we can rewrite the
sum in \eqref{eq:CMP_target} as
 \begin{align*}
    \sum_{i,j=1}^ma_ia_jy_iy_j \langle x_i,x_j\rangle^d&= a^TYK^{\circ
                                                         d}Y a\;.
 \end{align*}
 With this notation we derive the following criterion.
\begin{cor}\label{cor:exists_perceptron}
  The constrained maximization
  problem \eqref{eq:CMP}  is infeasible if and only if 
  \begin{align}
    \label{eq:bad_a}
    \exists\, a\in\mathbb{R}_+^m\setminus\{0\}:  ya=\sum_{i=1}^m y_i a_i=0 
    \text{ and } K^{\circ d}Ya=0\;.
  \end{align}
\end{cor}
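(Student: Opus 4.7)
The plan is to exploit the structure of the objective in \eqref{eq:CMP_target}: setting $M = Y K^{\circ d} Y$ and using $a \ge 0$ so that $\|a\|_1 = \mathbf{1}^T a$, rewrite it as $g(a) = \mathbf{1}^T a - \tfrac12 a^T M a$, and note that $M \succeq 0$ because $K^{\circ d} \succeq 0$ by iterated application of Schur's product theorem to the Gramian $K$. Thus $g$ is concave and the feasible set $C = \{a \ge 0 \mid ya = 0\}$ is a closed convex cone containing $0$. In view of Theorem \ref{thm:task_feasible}, ``infeasible'' means that the supremum of $g$ over $C$ is not attained, and for this concave problem on a cone I will argue this is equivalent to $g$ being unbounded above on $C$.

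For the ``if'' direction, suppose $a \in \mathbb{R}_+^m \setminus \{0\}$ satisfies \eqref{eq:bad_a}. Since $Y^2 = I$, $K^{\circ d} Y a = 0$ gives $M a = 0$, so $a \in C$ and $a^T M a = 0$. Along the ray $ta$ the objective becomes $g(ta) = t\,\mathbf{1}^T a$, which tends to $+\infty$ because $\mathbf{1}^T a = \|a\|_1 > 0$; hence no maximizer can exist.

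For the converse, assume no such $a$ exists. Then every $a \in C \setminus \{0\}$ satisfies $Ma = Y K^{\circ d} Y a \neq 0$, and since $M \succeq 0$ this forces $a^T M a > 0$. The slice $S = \{a \in C : \mathbf{1}^T a = 1\}$ is closed and bounded, hence compact, and $a \mapsto a^T M a$ attains a positive minimum $\delta > 0$ on $S$. By homogeneity, $a^T M a \ge \delta (\mathbf{1}^T a)^2$ on all of $C$, so
\[
  g(a) \;\le\; \mathbf{1}^T a - \tfrac{\delta}{2} (\mathbf{1}^T a)^2 \;\le\; \tfrac{1}{2\delta},
\]
and $g(a) \to -\infty$ as $\mathbf{1}^T a \to \infty$. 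Therefore the maximum lies in the compact sublevel set $\{a \in C : g(a) \ge 0\}$ and is attained by continuity, so \eqref{eq:CMP} has a solution.

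The main obstacle I expect is making the equivalence between ``no solution exists'' and ``objective unbounded'' precise: since $C$ is a closed convex cone rather than compact, one has to rule out the possibility that $g$ is bounded above yet the supremum is only approached along an escaping sequence. The homogeneity argument combined with the quadratic lower bound $a^T M a \ge \delta (\mathbf{1}^T a)^2$ is what forces the supremum into a bounded region and allows continuity to deliver an actual maximizer.
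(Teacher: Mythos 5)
Your proof is correct and follows essentially the same route as the paper: the ray $ta$ with $a^TYK^{\circ d}Ya=0$ shows unboundedness in one direction, and in the other the compactness of the slice $\{a\in C:\|a\|_1=1\}$ yields a positive minimum $\delta$ of the quadratic form, giving the bound $g(a)\le \|a\|_1-\tfrac{\delta}{2}\|a\|_1^2$. The only cosmetic difference is the last step: the paper exhibits the maximizer explicitly as $a_0/\mu$ (a scaled minimizer of the slice problem), whereas you deduce attainment from compactness of the sublevel set $\{a\in C: g(a)\ge 0\}$; both are valid.
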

\begin{proof}
 If $a$ satisfies \eqref{eq:bad_a}, then for all $t>0$ the vector $ta$
 satisfies the constraint \eqref{eq:CMP_constraint} while the target
 function in  \eqref{eq:CMP_target} simplifies to $\|ta\|_1=t\|a\|_1$.
 Since $\|a\|_1\neq 0$, there is no maximum.\\
 Conversely let $K^{\circ d}Ya\neq 0$ for all $a\neq 0$ satisfying
 \eqref{eq:CMP_constraint}. Since $K^{\circ d}$ is nonnegative definite, also
 $a^TYK^{\circ d}Ya> 0$ and by compactness
 \begin{align}\label{eq:mu}
\mu=\min\{a^TYK^{\circ d}Ya\;\big|\; a\in\mathbb{R}_+^m,  ya=0,
    \|a\|_1=1\}>0\;.
 \end{align}
 Hence, for all $a$ satisfying \eqref{eq:CMP_constraint} the target
 function in  \eqref{eq:CMP_target} is bounded by
 \begin{align*}
   \|a\|_1-\frac12a^TYK^{\circ d}Ya&\le \|a\|_1- \frac12\mu \|a\|_1^2 \;.
 \end{align*}
 Thus, if $a_0$ is a minimizer for \eqref{eq:mu}, then $a=\frac{a_0}\mu$
 solves \eqref{eq:CMP}. 
\end{proof}
\begin{ex}\label{ex:XOR}
  Let us illustrate this for the famous XOR-classification problem,
  where the vectors $x_i$ are given as the columns of the matrix
  $X=\left[
    \begin{array}{cccc}
      0&1&0&1\\0&0&1&1
    \end{array}
  \right]$ and the labels $y_i$ as the entries of the vector
  $y=\left[
    \begin{array}{cccc}
      -1&1&1&-1
    \end{array}
  \right]$.\\
  We compare the kernels $k(x,y)=\langle x,y\rangle ^d$ for $d=1$ and $d=2$.
  
  \begin{itemize}
  \item If $d=1$ then $K^{\circ 1}Y=\left[
      \begin{smallmatrix}
        0&0&0&0\\
        0&1&0&-1\\
        0&0&1&-1\\
        0&1&1&-2
      \end{smallmatrix}
    \right]$ with $v=\left[
      \begin{smallmatrix}
        1\\1\\1\\1
      \end{smallmatrix}
      \right]\in\ker K^{\circ 1}Y$. Since $yv=0$ and
      $\spann\{x_1,\ldots,x_4\}=\mathbb{R}^2$, this implies that 
    the XOR function cannot be realized by a classical perceptron.
    Note that here $\rank K^{\circ 1}Y=2$.
    \item If $d=2$, then $K^{\circ 2}Y=\left[
      \begin{smallmatrix}
        0&0&0&0\\
        0&1&0&-1\\
        0&0&1&-1\\
        0&1&1&-4
      \end{smallmatrix}
    \right]$. It is evident that $\rank K^{\circ 2}Y=3$ and $\ker K^{\circ 2}Y=\spann\{ e_1\}$ with the
    first canonical unit vector $e_1$. Thus $K^{\circ 2}Yv\neq 0$ for all
    $v\neq 0$ with $yv=0$. In fact, with $k(v,w)=\langle v,w\rangle^2$ the XOR-function is realized by
    \eqref{eq:decision_fct} with $\alpha_1=6$, $\alpha_2=\alpha_3=4$,
    $\alpha_4=2$, 
    and $b=-1$. 
  \end{itemize}
\end{ex}
By Corollary~\ref{cor:exists_perceptron} and the argument in Example~\ref{ex:XOR}, there exists a kernel perceptron
\begin{align*}
  f_p(x)&=\sigma\left(\sum_{i=1}^m \alpha_iy_i \langle x_i,x\rangle^d +b \right)\;,
\end{align*}
such that $f_p(x_j)=y_j$ for all $j=0,\ldots,2^n-1$, if
\begin{equation}\label{eq:defKdperceptron}
  \rank \Big(\langle x_i,x_j\rangle^d\Big)_{i,j=1}^{2^n-1}
  =\rank (X^TX)^{\circ d}
  = 2^n-1
  \;.
\end{equation}
More generally, it is known that any Boolean function in $n$ variables
can be realized by a kernel perceptron with $k(v,w)=\langle
v,w\rangle^n$ (e.g.,~\cite{WangWill91}).
This is now a direct consequence of Theorem~\ref{thm:main}.
\begin{cor}
  Any Boolean function in $n$ variables can be realized by a
  perceptron with polynomial kernel of degree $n$. \\
  For $d<n$ there exists a Boolean function that
  cannot be realized by a perceptron with polynomial kernel of degree $d$. 
\end{cor}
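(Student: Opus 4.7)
The plan is to read off both halves of the corollary from Theorem~\ref{thm:main} via Corollary~\ref{cor:exists_perceptron}. Let $\tilde X=[x_0,x_1,\ldots,x_{2^n-1}]\in\{0,1\}^{n\times 2^n}$ list \emph{all} points of $\{0,1\}^n$ and write $\tilde K^{\circ d}=(\tilde X^T\tilde X)^{\circ d}\in\mathbb{R}^{2^n\times 2^n}$. Since $x_0=0$, the $0$-th row and column of $\tilde K^{\circ d}$ vanish, and the remaining $(2^n-1)\times(2^n-1)$ block is precisely the matrix $K^{\circ d}$ of Theorem~\ref{thm:main}. By Corollary~\ref{cor:exists_perceptron}, for a given labelling $y\in\{\pm1\}^{2^n}$ the perceptron problem is infeasible if and only if there is some $0\neq a\in\mathbb{R}_+^{2^n}$ with $ya=0$ and $\tilde K^{\circ d} Y a=0$. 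Setting $v=Ya$, this is equivalent to the existence of $v\in\mathbb{R}^{2^n}\setminus\{0\}$ satisfying $\tilde K^{\circ d}v=0$ and $\mathbf{1}^Tv=0$, where afterwards one recovers $a_i=|v_i|$ and $y_i=\sign(v_i)$ (free where $v_i=0$).

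For the positive statement with $d=n$, Theorem~\ref{thm:main} gives $\rank K^{\circ n}=2^n-1$, so $\ker K^{\circ n}=\{0\}$. If $v$ had the two properties above, then its last $2^n-1$ entries would lie in $\ker K^{\circ n}$ and hence vanish; the remaining entry $v_0$ would then be forced to $0$ by $\mathbf{1}^Tv=0$, contradicting $v\neq0$. Hence no bad $a$ exists for any $y$, and Corollary~\ref{cor:exists_perceptron} (together with Theorem~\ref{thm:task_feasible}) produces coefficients $\alpha_i$ and a bias $b$ realizing any prescribed Boolean function.

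For the negative statement with $d<n$, Theorem~\ref{thm:main} yields
\[
\rank K^{\circ d}=\sum_{p=1}^{d}\binom{n}{p}<\sum_{p=1}^{n}\binom{n}{p}=2^n-1,
\]
so we may pick a nonzero $w\in\ker K^{\circ d}$. Define $v\in\mathbb{R}^{2^n}$ by $v_0=-\mathbf{1}^Tw$ and $(v_1,\ldots,v_{2^n-1})=w$; then $v\neq0$ (because $w\neq0$), $\mathbf{1}^Tv=0$, and the zero $0$-th row/column of $\tilde K^{\circ d}$ together with $K^{\circ d}w=0$ give $\tilde K^{\circ d}v=0$. Setting $y_i=\sign(v_i)$ on the support of $v$ (and arbitrarily elsewhere) and $a_i=|v_i|$ then produces the obstruction in Corollary~\ref{cor:exists_perceptron}, so the Boolean function $x_i\mapsto y_i$ cannot be realized by any polynomial kernel perceptron of degree $d$.

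The only mildly delicate point is bookkeeping around the vanishing $0$-th row and column of the full $2^n\times 2^n$ Gram matrix, which I expect to be the main technical nuisance rather than a genuine obstacle; everything else is an immediate translation of Theorem~\ref{thm:main} through the kernel--perceptron duality of Corollary~\ref{cor:exists_perceptron}.
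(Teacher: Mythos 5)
Your proposal is correct and follows essentially the same route as the paper: Theorem~\ref{thm:main} gives the rank of $K^{\circ d}$, and Corollary~\ref{cor:exists_perceptron} translates full rank (resp.\ rank deficiency) on the nonzero block into feasibility for every labelling (resp.\ an explicit obstructing labelling built from a kernel vector). The only cosmetic difference is that you set the coefficient at the zero data point directly to $v_0=-\mathbf{1}^Tw$, whereas the paper reaches the same $a$ by a sign-change argument in a parameter $t$; the bookkeeping around the vanishing row and column is handled correctly in both.
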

\begin{proof}
 Since $\rank K^{\circ n}=2^n-1$ by Theorem
 \ref{thm:main}, the null space of 
 $K^{\circ n}Y$ contains only scalar multiples $a=te_1$ of the first canonical unit
 vector. But then $ya=ty_1\neq 0$, if $a\neq 0$, i.e.,
 \eqref{eq:bad_a} does not hold.\\
 If $d<n$, then $\rank K^{\circ d}< 2^n-1$ by Theorem
 \ref{thm:main}.  Hence $\ker YK^{\circ d}$ contains the unit vector $e_1$ and another vector
 $v\in\mathbb{R}^m$ with first component $v_1=0$.  We define labels
 $y_1=-\sigma\left(\sum_{i=2}^mv_i\right)$ and 
 $y_j=\sigma(v_j)$ for $j>1$.  Then for all $t\ge 0$, the
 vector $a_t=Y v+te_1\in \mathbb{R}^m_+\setminus\{0\}$ is contained
 in the kernel of $K^{\circ d}Y$.
 Moreover $ya_0=\sum_{i=2}^ma_iy_i=\sum_{i=2}^mv_i$.  If this is zero,
 then  \eqref{eq:bad_a} holds for $a=a_0$. Otherwise, by construction of
 $y_1$, for large~$t$  the signs of $ya_0$ and $ya_t$ differ. Hence
 there exists  $t_0>0$, such that $a=a_{t_0}$ satisfies \eqref{eq:bad_a}.
\end{proof}

\subsection{General classification in $\mathbb{R}^n$}
\label{sec:gener-class-mathbbrn}

If we interpret a matrix $A=[x_1,\ldots,x_m]\in\mathbb{R}^{n\times m}$ considered in Section~\ref{sec:rank-hadamard-powers} as a
set of $m$ sample vectors  $x_j\in \mathbb{R}^n$, we can state the
following consequence of Theorem~\ref{thm:generic}. 
\begin{cor} Let $m\ge n$.
  There is an open and dense subset
  $\mathcal{A}\subset\mathbb{R}^{n\times m}$, such that for all
  $A\in\mathcal{A}$, all choices of  labels $y_j\in\{-1,1\}$,
  $j=1,\ldots,m$,  and all~$d$ satisfying $\binom{n+d-1}{d}\ge m$, there exists a
  classifier  \eqref{eq:decision_fct} with
  $k(v,w)=\langle v,w\rangle^d$ solving the task \eqref{eq:task}. 
\end{cor}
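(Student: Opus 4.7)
The plan is to reduce the statement to showing that $K^{\circ d}$ is positive definite for every $d\ge d_0$, where $d_0$ is the smallest integer with $\binom{n+d_0-1}{d_0}\ge m$. Once $K^{\circ d}$ is invertible, condition \eqref{eq:bad_a} admits no nonzero solution (the sign matrix $Y=\diag(y_1,\ldots,y_m)$ is invertible), so the optimization problem \eqref{eq:CMP} is feasible by Corollary~\ref{cor:exists_perceptron}, and Theorem~\ref{thm:task_feasible} supplies the classifier.

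The candidate set $\mathcal{A}$ will be the intersection of three open dense conditions on $A\in\mathbb{R}^{n\times m}$: (i) no column of $A$ vanishes, (ii) $\rank A=n$, and (iii) $\bigcup_{d=1}^{d_0}\mathcal{H}_{A^T}^d$ is in general position in $\mathbb{R}^m$. Condition (iii) is open and dense by Lemma~\ref{lemma:Hsetsgeneric}(a) applied to the transpose $A^T\in\mathbb{R}^{m\times n}$ (using $m\ge n\ge 2$; the case $n=1$ forces $m=1$ and is trivial). With $r_1,\ldots,r_n\in\mathbb{R}^m$ denoting the rows of $A$ viewed as column vectors, one has $K=A^TA=\sum_{j=1}^n r_j r_j^T$, so \eqref{eq:hadamard_power_calc} gives
\[
K^{\circ d_0}=\sum_{\ell_1,\ldots,\ell_{d_0}=1}^n (r_{\ell_1}\circ\cdots\circ r_{\ell_{d_0}})(r_{\ell_1}\circ\cdots\circ r_{\ell_{d_0}})^T.
\]
Hence $\im K^{\circ d_0}=\spann\mathcal{H}_{A^T}^{d_0}$ is spanned by exactly $\binom{n+d_0-1}{d_0}\ge m$ vectors in general position in $\mathbb{R}^m$; any $m$ of them form a basis, so $K^{\circ d_0}\succ 0$.

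The main obstacle I foresee is making a \emph{single} open dense $\mathcal{A}$ work uniformly for every $d\ge d_0$, not just for $d=d_0$. I plan to handle this by a Schur-product propagation argument: pick $\epsilon>0$ with $K^{\circ d_0}\succeq \epsilon I$, factor $K^{\circ d}=K^{\circ d_0}\circ K^{\circ(d-d_0)}$, and apply Schur's product theorem to the PSD matrices $K^{\circ d_0}-\epsilon I$ and $K^{\circ(d-d_0)}$ to obtain
\[
K^{\circ d}\;\succeq\;\epsilon\, I\circ K^{\circ(d-d_0)}\;=\;\epsilon\,\diag\bigl(\|x_i\|^{2(d-d_0)}\bigr)_{i=1}^m\;\succ\;0,
\]
where strict positivity of the diagonal uses condition (i). This propagates positive definiteness from $d_0$ to every $d\ge d_0$ and closes the reduction outlined at the start.
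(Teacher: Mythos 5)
Your proposal is correct, and its skeleton coincides with the paper's: a generic general-position condition on the Hadamard products of the rows of $A$ forces $K^{\circ d}=(A^TA)^{\circ d}$ to be nonsingular, nonsingularity rules out condition \eqref{eq:bad_a}, and Corollary~\ref{cor:exists_perceptron} together with Theorem~\ref{thm:task_feasible} then produces the classifier. (Your bookkeeping is in fact more careful than the printed proof, which writes $\mathcal{H}_A^d$ and $A_{\ell_i}$ where the rows of $A$ --- that is, $\mathcal{H}_{A^T}^d$ --- are meant.) Where you genuinely diverge is in handling the exponents $d$ with $\binom{n+d-1}{d}>m$: the paper builds general position for \emph{every} $d$ with $\binom{n+d-1}{d}\le m$ into its set $\mathcal{A}$ and then defers the remaining exponents to ``as in the proof of Theorem~\ref{thm:generic}'', where the claim that the rank stays maximal beyond the threshold is left largely implicit. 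You instead impose general position only at the single threshold exponent $d_0$, add the open dense requirement that no column of $A$ vanishes, and propagate positive definiteness upward by splitting $K^{\circ d}=K^{\circ d_0}\circ K^{\circ(d-d_0)}$, peeling off $\epsilon I$ from $K^{\circ d_0}$, and applying the Schur product theorem \eqref{eq:AcircBge0} to get $K^{\circ d}\succeq \epsilon\,\diag\bigl(\|x_i\|^{2(d-d_0)}\bigr)\succ 0$. This buys a weaker genericity hypothesis and, more importantly, an explicit proof of the monotonicity step the paper glosses over. One small point to tidy: you cite Lemma~\ref{lemma:Hsetsgeneric}(a) for $A^T\in\mathbb{R}^{m\times n}$, whose shape is the transpose of the one in the lemma's hypothesis (there ``$m\ge n\ge 2$'' means columns outnumber rows); the lemma's proof never uses that inequality, and the paper applies it in the same orientation to $\mathcal{S}_{r,m}$ in Theorem~\ref{thm:generic}, but a sentence acknowledging this would make the citation airtight.
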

\begin{proof}
  Let
  $\mathcal{A}=\{A\in\mathbb{R}^{n\times m}\;\big|\;
    \binom{n+d-1}{d}\le m \;\Rightarrow\; \mathcal{H}_A^d\text{ is in general position}\}$.
  By  Lemma~\ref{lemma:Hsetsgeneric}, the set $\mathcal{A}$ is open and dense in
  $\mathbb{R}^{n\times m}$. In particular $\rank A=n$ for
  $A\in\mathcal{A}$. Therefore, as in the proof of Theorem~\ref{thm:generic}, it follows that
  \begin{align*}
  K^{\circ d}&=  (A^TA)^{\circ d}=\sum_{\ell_1,\ldots,\ell_d=1}^r (A_{\ell_1}\circ\cdots \circ A_{\ell_d})(A_{\ell_1}\circ\cdots \circ A_{\ell_d})^T
  \end{align*}
  is nonsingular, if $\binom{n+d-1}{d}\ge m$. By
  Corollary~\ref{cor:exists_perceptron} the task is feasible.
\end{proof}
\begin{remark}
  Usually, with a positive kernel $k$, one associates a reproducing
  Hilbert space $\mathcal{R}_k$ as the new feature space. The previous result is consistent
  with  $\dim\mathcal{R}_k=\binom{n+d-1}{d}$ for
  $k(\cdot,\cdot)=\langle\cdot,\cdot\rangle^d$, see
  \cite{SchoSmol02}.
  One also says that every $A\in\mathcal{A}$ is \emph{shattered} by the set
  of kernel perceptrons with this $k$.
\end{remark}
\begin{ex}
  From Section~\ref{sec:class-dich-with}, recall the matrix $X\in\mathbb{R}^{n\times m}$ of Boolean vectors,
  where $m=2^n$. We have shown that this set of vectors is shattered
  by polynomial kernel perceptrons of degree $d=n$. For arbitrarily
  small perturbations of $X$ it suffices to choose the minimal
  $d=d_n$, so that $\binom{n+d-1}{d}\ge 2^n$. For instance, we have
  $d_{10}=5$, $d_{20}=8$, $d_{40}=14$, $d_{80}=26$, $d_{160}=50$.
\end{ex}

\section{Conclusion}
\label{sec:conclusion}
The main goal of this note was to point out a relation between kernel
perceptrons with polynomial kernel and the rank of Hadamard powers. 
This relation provided us with a new way of deriving some known
results on the capacity of such perceptrons. Moreover, we computed the
generic rank of Hadamard powers directly without resorting to abstract
results on algebraic varieties.
\section*{Acknowledgments}
We thank the referees for their benevolent and constructive comments.
The discussion of \emph{non-integer} Hadamard powers in Subsection \ref{sec:non-integer-hadamard} 
has been triggered by one of the reports.

\bibliography{hadamard_kernel}

\end{document}